\documentclass[a4j, dvipdfmx]{amsart}
\usepackage{amsmath,amssymb}
\usepackage{amsthm}
\usepackage{graphicx}
\usepackage{bm}
\usepackage{tikz-cd}
\usepackage{mathtools}
\usepackage{comment}
\usepackage{bigdelim, multirow}
\usepackage{latexsym}
\usepackage{stmaryrd}
\usepackage{braket}

\newcommand{\Z}{\mathbb{Z}}
\newcommand{\R}{\mathbb{R}}
\newcommand{\C}{\mathbb{C}}
\newcommand{\Q}{\mathbb{Q}}

\newcommand\ch{\mathop{\mathrm{ch}}\nolimits}

\newcommand\RatCurves{\mathop{\mathrm{RatCurves}}\nolimits}
\def\i<#1>{\langle #1 \rangle}%
\def\l<#1>{\left\langle #1 \right\rangle}
\def\fr<#1/#2>{\frac{#1} {#2}}

\newcommand{\stirling}{\genfrac\{\}{0pt}{}}


\newtheorem{thm}{Dont use this}[section]
\newtheorem{theorem}[thm]{Theorem}
\newtheorem{proposition}[thm]{Proposition}
\newtheorem{corollary}[thm]{Corollary}
\newtheorem{lemma}[thm]{Lemma}

\newtheorem*{definition*}{Definition}

\newtheorem{definition and lemma}[thm]{Definition $\&$ Lemma}

\newtheorem*{remark*}{Remark}
\theoremstyle{definition}
\newtheorem{definition}[thm]{Definition}
\newtheorem{remark}[thm]{Remark}

\theoremstyle{definition}

\title{%
On a sufficient condition for a Fano manifold to be covered by rational $N$-folds
}
\author{%
Takahiro Nagaoka%
}
\address[Takahiro Nagaoka]{Department of Mathematics,
	Graduate School of Science,
	Kyoto University,
	Kyoto 606-8502, Japan}
\email{tnagaoka@math.kyoto-u.ac.jp}
\subjclass[2010]{14J45, 11B68}
\date{}
\keywords{Fano manifolds, rational curves, covered by rational manifolds, Bernoulli numbers}

\begin{document}
\pagestyle{plain}

\begin{abstract}

In this paper, we prove a conjecture by T. Suzuki, which says if a smooth Fano manifold satisfies some positivity condition on its Chern character, then it can be covered by rational $N$-folds. We prove this conjecture by using purely combinatorial properties of Bernoulli numbers.  
\end{abstract} 
\maketitle
\section{Introduction}
For a Fano manifold $X$ i.e., a smooth projective variety whose anti-canonical line bundle $-K_X$ is ample, the following result by S. Mori is well-known (\cite{Mo}): every smooth Fano manifold $X$ is covered by rational curves. Thus, if we take a general point $x$, we can consider families of rational curves on $X$ through the point $x$. Among such families, one can consider {\it minimal} family $H_1$, which parametrizes rational curves whose intersection with $-K_X$ is minimal, and we write $X\vdash H_1$. Since one can obtain some geometric properties of $X$ from $H_1$, many authors studied $H_1$ extensively (\cite{Ko}, \cite{Ke}, \cite{Dr}, and so on). 

If $H_1$ is also a Fano manifold, one can consider minimal family $H_2$ of rational curves on $H_1$ through a general point. In \cite{Su}, Suzuki defined {\it higher order minimal families of rational curves} as a chain $X\vdash H_1\vdash \cdots\vdash H_N$ such that $H_i$ is Fano manifold for each $1\leq i\leq N-1$ and $H_i\vdash H_{i+1}$. He also introduced two crucial invariants $\underline{N}_X$ (resp. $\overline{N}_X$) which are the minimum (resp. maximum) length $N$ of higher order minimal families of rational curves $X\vdash H_1\vdash \cdots\vdash H_N$ such that $H_N$ is not a Fano manifold. Suzuki studied these invariants and proved the following higher dimensional generalization of S. Mori's result in the case $2\leq N\leq100$. Our main result is to prove this theorem for a general $N$ (cf. \cite[Conjecture 4.9]{Su}). 

\begin{theorem}\label{Main} Let $N$ be an integer $N\geq2$. Let $X$ be a Fano manifold with nef Chern characters $\ch_2(X), \ldots ,\ch_N(X)$. Assume that there is a family $X\vdash H_1$ of dimension at least $N^2-N-1$.
Then $\underline{N}_X\geq N$ holds. Moreover, if $H_1\ncong \mathbb{P}^d, Q^d$ and there exists families $H_1\vdash H_2\vdash \cdots \vdash H_{N-1}$ such that $H_2, \ldots ,H_{N-1}\ncong\mathbb{P}^d$, then $X$ is covered by rational $N$-folds.
\end{theorem}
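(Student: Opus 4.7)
The plan is to work within the framework established by Suzuki in \cite{Su}, who reduced Conjecture 4.9 to a purely combinatorial statement about Bernoulli numbers and verified it numerically for $2 \leq N \leq 100$. The bulk of the proof will therefore consist in establishing this combinatorial statement uniformly in $N$, while the geometric and inductive skeleton is essentially already in place.

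The argument proceeds by induction on $N$, the base case $N=2$ being Mori's theorem. In the inductive step one passes from $X$ to $H_1$: one needs $H_1$ to be Fano, to admit a minimal family $H_1 \vdash H_2$ with $\dim H_2 \geq (N-1)^2-(N-1)-1 = N^2-3N+1$, and to have nef Chern characters $\ch_2(H_1), \ldots, \ch_{N-1}(H_1)$, so that the inductive hypothesis applies with $N-1$ in place of $N$. The Fano property of $H_1$ and the required dimension bound for $H_1 \vdash H_2$ follow from standard VMRT theory in the spirit of Kebekus and Hwang--Mok, once one has $\dim H_1 \geq N^2-N-1$; the dimension drop between consecutive levels is $2N-2$, which fits exactly. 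The propagation of nefness is more delicate: one expresses $\ch_k(H_1)$ in terms of the $\ch_j(X)$ via a universal formula (coming from a Grothendieck--Riemann--Roch computation on the universal family of minimal rational curves through a general point of $X$), whose coefficients are explicit $\Q$-linear combinations of Bernoulli numbers $B_{2m}$.

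The main obstacle is therefore the combinatorial lemma: for each $2 \leq k \leq N-1$, the specific combination of Bernoulli numbers appearing in Suzuki's formula for $\ch_k(H_1)$ has the correct sign so that nefness is preserved. My plan for this lemma is to translate the required inequalities into statements about the generating function $t/(e^t-1) = \sum_{m \geq 0} B_m t^m/m!$ and its iterates, and then either to exploit direct identities among Bernoulli numbers (Euler recursion, von Staudt--Clausen) or to use the classical integral representation $|B_{2m}| = 2(2m)!\,\zeta(2m)/(2\pi)^{2m}$ to obtain uniform control. The genuinely hard part is handling the alternating signs of the $B_{2m}$ without losing positivity after cancellation; a clean way to do this is to reformulate the sum as the residue of a meromorphic function built from $t/(e^t-1)$, whose sign can be read off by a contour manipulation and whose numerical behavior matches Suzuki's verification in the range $N \leq 100$.

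For the last assertion, once the first part gives $\underline{N}_X \geq N$ we have a chain $X \vdash H_1 \vdash \cdots \vdash H_{N-1}$ with each $H_i$ Fano. The additional hypotheses $H_1 \ncong \mathbb{P}^d, Q^d$ and $H_2, \ldots, H_{N-1} \ncong \mathbb{P}^d$ rule out the exceptional cases in which the evaluation map of the iterated universal family degenerates, so that by the rationality criteria for Fano manifolds covered by their minimal rational curves (Cho--Miyaoka--Shepherd-Barron, Kebekus--Sol\`a Conde--Toma), the $N$-fold tower of universal families projects to $X$ with a generically rational $N$-dimensional fiber. This precisely says that $X$ is covered by rational $N$-folds.
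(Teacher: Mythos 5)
The geometric skeleton you describe is essentially Suzuki's own reduction (Theorem 5.1 of \cite{Su}, quoted as Theorem \ref{theorem:Suzuki} in the paper): the entire statement follows once one shows $b_{(i,j,k)}>0$ for all $i\geq1$, $j=1,2$, $1\leq k\leq i+j$. You correctly identify this combinatorial positivity as the crux, but you do not prove it; you only list candidate strategies (Euler recursion, von Staudt--Clausen, the integral representation of $|B_{2m}|$ via $\zeta(2m)$, a residue/contour reformulation) and appeal to the fact that the ``numerical behavior matches Suzuki's verification in the range $N\leq100$''. That is exactly the statement that was left open for general $N$, so the proposal has a genuine gap precisely where the new content of the theorem lies. (Your description of the geometric induction also slightly misplaces which coefficients must be positive: what is needed is positivity of the coefficients in the formulas for $\ch_1(H_i)$ and $\ch_2(H_i)$, i.e.\ $j=1,2$ with $i$ ranging up to $N-1$ and $k$ up to $i+j$, not the coefficients of $\ch_k(H_1)$ for $2\leq k\leq N-1$.)

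Moreover, your stated concern --- ``handling the alternating signs of the $B_{2m}$ without losing positivity after cancellation'' --- indicates the plan would not converge as framed, because the key observation is that no cancellation occurs at all. The paper computes the exponential generating function $D_{(i,k)}(t)=\sum_j (-1)^j j!\,b_{(i,j,k)}\,t^j/j!$ in closed form ($(-t)^k/(1-e^t)^i$ for $k\geq i$, and an expression built from $\left(-\log(1-s)/s\right)^k$ for $k<i$), which yields explicit formulas: for $i\leq k\leq i+j$, $b_{(i,j,k)}$ is a sum of products $\prod_r (-1)^{\ell_r}B_{\ell_r}/\ell_r!$ with $\ell_1+\cdots+\ell_i=i+j-k\leq j\leq 2$, so only $B_0$, $B_1$, $B_2$ ever appear and every factor $(-1)^{\ell}B_{\ell}$ is positive; for $k<i$ the formula is a short Stirling-number combination of the manifestly positive sums $\sum 1/((\ell_1+1)\cdots(\ell_k+1))$, whose positivity for $j=1,2$ is checked by a direct elementary manipulation. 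Without such an explicit closed form (or an equivalent identity), the uniform positivity for all $N$ does not follow from the tools you list, and the proof is incomplete.
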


Suzuki defined a sequence $\{b_{(i,j,k)} \in\Q\ | \ i, j\geq1, 1\leq k\leq i+j\}$ involving Bernoulli numbers (\cite[Notation 4.3]{Su}).
Then, in \cite[Notation 4.3, Proposition 4.4, Proposition 4.6(1)]{Su}, he showed $\ch_j(H_i)$ can be computed by using $b_{(i,j,k)}$ as the following;
\[\ch_j(H_i)=-i\fr<c_1(L_i)^j/j!>+\sum_{k=1}^{i-1}{b_{(i,j,k)}T^k(\ch_k(X))c_1(L_i)^j}+\sum_{k=i}^{i+j}{b_{(i,j,k)}T^i(\ch_k(X))c_1(L_i)^{i+j-k}}\]
(for the definition of ample line bundle $L_i$ on $H_i$ and linear map $T^m:N^r(H_i)_\R\to N^{r-m}(H_{i+m})_\R$, see \cite[Definition 2.12]{Su}).

From this expression, he showed that the positivity of $b_{(i,j,k)}$ ($1\leq i<N$, $j=1,2$, and $1\leq k\leq i+j$) implies Theorem \ref{Main} (cf. Theorem \ref{theorem:Suzuki}). 
Actually, he checked this positivity for $2\leq N\leq100$ by a computer and deduced the above theorem for $2\leq N\leq100$. We determine $b_{(i,j,k)}$ explicitly and prove its positivity for all $N\geq2$ as the following.  Consequently, we have the above theorem.
\begin{theorem}(cf. Corollary \ref{cor:explicitb} and Theorem \ref{thm:main})\\
For $i, j\geq1$ and $1\leq k\leq i+j$, we have the explicit formula for $b_{(i,j,k)}$ as the following.   
\begin{itemize}
\item[(1)]When $i\leq k\leq i+j$, 
\[b_{(i,j,k)}=\sum_{\substack{\ell_1+\cdots+\ell_i=i+j-k\\ \forall r, \ell_r\geq0}}{\fr<(-1)^{\ell_1}B_{\ell_1}/\ell_1!>\times\cdots\times\fr<(-1)^{\ell_i}B_{\ell_i}/\ell_i!>},\]
\item[(2)]When $1\leq k<i$, 
\[b_{(i,j,k)}=\fr<1/(-1)^{j}j!>\sum_{p=0}^{j}{(-1)^pp!\stirling{j}{p}\left(\sum_{\substack{\ell_1+\cdots+\ell_k=p+i-k\\ \forall r, \ell_r\geq0}}{\fr<1/(\ell_1+1)\cdots(\ell_k+1)>}\right)}.\]
\end{itemize}
where $\stirling{j}{p}$ are Stirling numbers of the second kind (see below Remark \ref{Daehee}), and $B_\ell$ are the $\ell$-th Bernoulli numbers. 

In particular, for $i\geq1$, $j=1,2$ and $1\leq k\leq i+j$, $b_{(i,j,k)}>0$ holds.
\end{theorem}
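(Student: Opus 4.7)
The plan is to derive the explicit formulas from Suzuki's definition by exhibiting $b_{(i,j,k)}$ as a coefficient of a simple generating function, and then to read off the positivity for $j=1,2$ from these closed forms. The two relevant generating functions are the Todd-type series
\[
T(x) := \frac{x}{1-e^{-x}} = \sum_{\ell\geq 0} \frac{(-1)^\ell B_\ell}{\ell!} x^\ell
\]
and the logarithmic series
\[
L(x) := \frac{-\log(1-x)}{x} = \sum_{\ell\geq 0} \frac{x^\ell}{\ell+1}.
\]
My first step is to unwind Suzuki's definition \cite[Notation 4.3]{Su} and match case (1), where $i\leq k\leq i+j$, with the coefficient extraction $b_{(i,j,k)} = [x^{i+j-k}]\,T(x)^i$; the stated formula in (1) is then simply the expansion of this $i$-fold convolution of the Bernoulli sequence. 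For case (2), where $1\leq k<i$, I expect the Stirling numbers to enter through the monomial-to-falling-factorial change of basis $x^j=\sum_p\stirling{j}{p}\,x(x-1)\cdots(x-p+1)$, translating Suzuki's expression into a sum over $p$ of coefficients of $L(x)^k$ via the elementary identity
\[
\sum_{\ell_1+\cdots+\ell_k=p+i-k}\,\prod_{r=1}^k\frac{1}{\ell_r+1} \;=\; [x^{p+i-k}]\,L(x)^k.
\]

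With the two closed forms in hand, positivity for $j=1$ is essentially immediate. In case (1) one gets $b_{(i,1,i+1)}=1$ and $b_{(i,1,i)}=-iB_1=i/2$. In case (2) the Stirling sum collapses (using $\stirling{1}{1}=1$) to $b_{(i,1,k)}=[x^{i-k+1}]\,L(x)^k$, which is positive since every power of $L(x)$ has strictly positive coefficients. Positivity for $j=2$ in case (1) is a finite check at $k\in\{i,i+1,i+2\}$: the three resulting values $1$, $i/2$, and $i/12+i(i-1)/8$ are manifestly positive combinations of $B_0,B_1,B_2$.

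The real content is case (2) with $j=2$. Here $\stirling{2}{1}=\stirling{2}{2}=1$ reduces positivity to
\[
b_{(i,2,k)} \;=\; \tfrac{1}{2}\bigl(\,2\,[x^{i-k+2}]L(x)^k - [x^{i-k+1}]L(x)^k\,\bigr) \;>\; 0,
\]
equivalently $[x^{i-k+2}]\,(2-x)L(x)^k>0$. My plan is to use the key positivity identity
\[
(2-x)\,L(x) \;=\; 2 + \sum_{m\geq 2}\frac{m-1}{m(m+1)}\,x^m,
\]
whose coefficients are non-negative, strictly positive at $m=0$ and for all $m\geq 2$. Multiplying by $L(x)^{k-1}$, which itself has strictly positive coefficients, shows that $(2-x)\,L(x)^k$ has non-negative coefficients; the coefficient of $x^{i-k+2}$ is strictly positive because $i-k+2\geq 3$ in this range and the convolution picks up the positive term $[x^{i-k}]\,L(x)^{k-1}\cdot[x^2]\,(2-x)L(x)=\tfrac{1}{6}[x^{i-k}]L(x)^{k-1}$. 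This is exactly $2S_2-S_1>0$.

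The main obstacle I anticipate is the very first step: matching Suzuki's recursive construction of $b_{(i,j,k)}$ with the two proposed closed forms, in particular accounting correctly for the threshold behavior at $k=i$ between the Todd regime (case (1)) and the logarithmic regime (case (2)). Once that dictionary is established, the remaining work—the direct expansions for $j=1$ and the factorization $(2-x)L(x)^k=L(x)^{k-1}\cdot(2-x)L(x)$ for $j=2$—is elementary power-series bookkeeping.
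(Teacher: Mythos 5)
Your closed forms are the correct ones and your positivity analysis is essentially sound, but the core of the theorem---deriving those closed forms from Suzuki's recursion---is precisely the step you defer as ``the main obstacle,'' and it is where the paper does all of its real work. Case (1) is indeed the easy half: $b_{(i,j,k)}=[x^{i+j-k}]\,T(x)^i$ follows by induction on $i$ because the recursion is literally the Cauchy product with $T$. For case (2), however, your proposed mechanism (the monomial-to-falling-factorial change of basis) is only a guess and does not touch the actual difficulty: once one packages the recursion into the exponential generating function $D_{(i,k)}(t)=\sum_j d_{(i,j,k)}t^j/j!$ with $d_{(i,j,k)}=(-1)^jj!\,b_{(i,j,k)}$, the step from $i-1$ to $i$ reads $D_{(i,k)}=\frac{1}{1-e^t}\bigl(D_{(i-1,k)}-d_{(i-1,0,k)}\bigr)$, so for $k<i$ the answer depends on boundary constants $d_{(r+k,0,k)}$ that are not part of the original data and must be determined. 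The paper pins them down by a Laurent-series lemma: requiring that $D_{(i,k)}(t)=(1-e^t)^{k-i}\bigl((-t/(1-e^t))^k-\sum_{r}d_{(r+k,0,k)}(1-e^t)^r\bigr)$ contain no negative powers of $t$ forces $\sum_q d_{(q+k,0,k)}s^q=(-\log(1-s)/s)^k=L(s)^k$; the Stirling numbers then enter through the substitution $s=1-e^t$ via $(e^t-1)^p/p!=\sum_j\stirling{j}{p}t^j/j!$, not through the falling-factorial basis. Without some argument identifying these boundary coefficients and the resulting shift $p+i-k$, your case (2) formula is unproven, and this is the genuinely nontrivial content of the statement.

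On the positivity side your argument is correct and in one place cleaner than the paper's. Case (1) for $j=1,2$ is the same finite check ($(-1)^\ell B_\ell>0$ for $\ell=0,1,2$), and case (2) with $j=1$ is immediate in both treatments. For case (2) with $j=2$, your factorization $(2-x)L(x)^k=\bigl((2-x)L(x)\bigr)\,L(x)^{k-1}$ with $(2-x)L(x)=2+\sum_{m\ge 2}\frac{m-1}{m(m+1)}x^m$ establishes nonnegativity of all coefficients at once, whereas the paper distributes the subtraction over the $k$ coordinates term by term (and its displayed intermediate line even transposes the two sums). One small repair is needed: the convolution term you cite for strict positivity, $[x^{i-k}]L(x)^{k-1}\cdot[x^2]\bigl((2-x)L(x)\bigr)$, vanishes when $k=1$, since then $L(x)^{k-1}=1$ and $i-k=i-1\ge 1$; for $k=1$ read off $[x^{i+1}]\bigl((2-x)L(x)\bigr)=\frac{i}{(i+1)(i+2)}>0$ directly, and for $k\ge 2$ you may equally use the term $2\,[x^{i-k+2}]L(x)^{k-1}>0$.
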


This paper is organized as follows. In Section 2, we explain some basic notions and review Suzuki's work.  In Section 3, for later convenience, we extend the range of the definition of $b_{(i,j,k)}$ and define $d_{(i,j,k)}$. In Section 4, we compute the generating function of $d_{(i,j,k)}$ and determine $b_{(i,j,k)}$ very explicitly. In Section 5, we prove $b_{(i,1,k)}>0$, $b_{(i,2,k)}>0$, and deduce Theorem\ref{Main}. 
\section*{Acknowledgements}
The author wishes to express his gratitude to Professor Taku Suzuki for letting me know his conjecture and giving me some useful comments. He is also grateful to Professor Norihiko Minami for letting him know he also proves the same conjecture by another method (one can read his preprint on arXiv in a near future).
\section{Suzuki's work on higher order minimal family of rational curves associated to Fano manifolds}
In this section, we review minimal family of rational curves of a Fano manifold and Suzuki's work (see \cite{Su} and the references given there). Let $X$ be a smooth Fano manifold of dimension $n$ and let $x\in X$ be a general point.
 
\begin{definition}
We define $\RatCurves^n(X, x)$ as the normalization of the scheme of all rational curves on $X$ passing through $x$ (see \cite[I, II]{Ko}). An irreducible component $H$ of $\RatCurves^n(X, x)$, which parameterize rational curves whose intersection number with $-K_X$ minimal, is called a {\it minimal family of rational curves through }$x$. In this case, we write $X\vdash H$, and $H$ is known to be smooth and proper (cf. \cite{Su}). 
\end{definition}

If we write $X\vdash H_1\vdash \cdots\vdash H_N$, it means each $H_{i-1}$ is a positive dimensional Fano manifold for each $1\leq i\leq N$ and satisfies $H_{i-1}\vdash H_i$, where $H_0:=X$.   

If $X\vdash H_1\vdash \cdots\vdash H_N$ is a chain of families, then we call this $N$ its {\it length}. 
\begin{definition}Let $X$ be a smooth algebraic variety. If $X$ is Fano manifold, we define $\underline{N}_X$ (resp. $\overline{N}_X$) as the minimum (resp. maximum) length of families $X\vdash H_1\vdash \cdots\vdash H_N$, where $H_N$ is not a Fano manifold. If $X$ is not a Fano manifold, we define $\underline{N}_X=\overline{N}_X=0$.
\end{definition} 

For a projective manifold $Z$, we denote by $N_r(Z)$ (resp. $N^r(Z)$) the group of cycles of dimension $r$ (resp. codimension $r$) on $Z$ modulo numerical equivalence. We set $N_r(Z)_{\R}:=N_r(Z)\otimes_{\Z}\R$ and $N^r(Z)_{\R}:=N^r(Z)\otimes_{\Z}\R$. 
\begin{definition}For a projective manifold $Z$, we say that $\alpha\in N^r(Z)_\R$ is {\it nef} if $\alpha\cdot\beta\geq0$ for every nonzero effective integral cycle $\beta\in N_r(Z)_\R$.
\end{definition}


To define $b_{(i,j,k)}$, we recall the Bernoulli numbers.
\begin{definition}(Bernoulli numbers)\\
We define the Bernoulli numbers $\{B_n\}_{n\geq0}$ by the following generating function;  
\[\fr<t/e^t-1>=\sum_{n=0}^{\infty}{B_n\fr<t^n/n!>}\]
\end{definition}
In \cite{Su}, for $i\geq1, j\geq1, 1\leq k\leq i+j$, $b_{(i,j,k)}\in\Q$ is defined by the following recurrence relation (cf. \cite{Su} Notation 4.3). 
\[b_{(1,j,k)}:=\fr<(-1)^{j+1-k}B_{j+1-k}/(j+1-k)!>\]
\begin{equation*}\tag{*}
b_{(i,j,k)}:=\sum_{m=0}^{\min\{j, i+j-k\}}{\fr<(-1)^mB_m/m!>b_{(i-1, j+1-m, k)}} \ \ (i\geq2)
\end{equation*} 
\begin{theorem}\label{theorem:Suzuki}(Conjecture 4.9 and Theorem 5.1 in  \cite{Su})\\
If $b_{(i,j,k)}>0$ for $i\geq1$, $j=1,2$, and $1\leq k\leq i+j$, then Theorem \ref{Main} holds. 
\end{theorem}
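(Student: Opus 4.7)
The plan is to reduce the claim to the geometric analysis already carried out in \cite[Theorem 5.1]{Su}, using the formulas
\[
\ch_j(H_i)=-i\,\frac{c_1(L_i)^j}{j!}+\sum_{k=1}^{i-1}b_{(i,j,k)}T^k(\ch_k(X))c_1(L_i)^j+\sum_{k=i}^{i+j}b_{(i,j,k)}T^i(\ch_k(X))c_1(L_i)^{i+j-k}
\]
as the only place the constants $b_{(i,j,k)}$ enter. Once those constants are strictly positive for $j=1,2$, and the inputs $\ch_k(X)$ are nef by hypothesis, every summand on the right-hand side is a nef class on $H_i$: indeed $L_i$ is ample, and Suzuki shows that the map $T^m$ preserves nefness. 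So the positivity of the coefficients converts directly into positivity statements for $\ch_1(H_i)$ and $\ch_2(H_i)$.

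First, applying the formula with $j=1$ and inducting on $i$, I would argue that $\ch_1(H_i)=-K_{H_i}$ is ample: the dimension bound $\dim H_1\geq N^2-N-1$ feeds into Suzuki's estimate $\dim H_{i+1}\geq \dim H_i-(i+1)$ to keep the anti-canonical class of large enough length to absorb the negative contribution $-i\,c_1(L_i)$, while the remaining terms enter with positive coefficients against nef classes. This inductive Fano-ness yields a minimal family $H_i\vdash H_{i+1}$ at every stage, so the chain $X\vdash H_1\vdash\cdots\vdash H_N$ exists and $\underline N_X\geq N$, giving the first conclusion of Theorem \ref{Main}.

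For the second conclusion, the $j=2$ formula together with $b_{(i,2,k)}>0$ and nefness of $\ch_k(X)$ yields nefness of $\ch_2(H_i)$ at each step. Combined with the non-degeneracy hypotheses $H_1\ncong\mathbb{P}^d,Q^d$ and $H_2,\dots,H_{N-1}\ncong\mathbb{P}^d$, the Cartan--Fubini / Hwang--Mok type structural results for varieties of minimal rational tangents, as applied in \cite[\S5]{Su}, ensure that the rational subvarieties of $X$ produced by iterating the tower $X\vdash H_1\vdash\cdots\vdash H_{N-1}$ genuinely have dimension $N$: a collapse at any stage would force one of the $H_i$ into the excluded projective or quadric case. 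This produces the promised covering of $X$ by rational $N$-folds.

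The main obstacle is really the geometric content bundled into \cite[Theorem 5.1]{Su}: translating positivity of Chern characters along the tower into the ampleness of anti-canonical classes and the non-degeneration of the fibration of minimal rational tangents. Since that geometric package is already established in \cite{Su}, my job in proving the present implication is just to invoke \cite[Theorem 5.1]{Su} and observe that the hypothesis $b_{(i,j,k)}>0$ for $j=1,2$ is exactly the combinatorial input Suzuki requires.
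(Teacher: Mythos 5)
The paper offers no proof of this statement at all: it is quoted verbatim as an established result of Suzuki (Conjecture 4.9 and Theorem 5.1 of \cite{Su}), with all the geometric content deferred to that reference, and your proposal ultimately does exactly the same thing by invoking \cite[Theorem 5.1]{Su} and noting that positivity of $b_{(i,j,k)}$ for $j=1,2$ is precisely the combinatorial hypothesis Suzuki requires. Your interpolated sketch of Suzuki's argument is harmless but not load-bearing (and slightly imprecise, e.g.\ the term $-i\,c_1(L_i)^j/j!$ means not every summand is nef), so the proposal matches the paper's treatment.
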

We will prove the positivity of $b_{(i,j,k)}$ for $i\geq1$, $j=1,2$, and $1\leq k\leq i+j$ by a combinatorial method and deduce the main theorem.  
\begin{theorem}
When $i\geq1$, $j=1,2$, $1\leq k\leq i+j$, 
\[b_{(i,j,k)}>0\]
In particular, Theorem \ref{Main} holds.  
\end{theorem}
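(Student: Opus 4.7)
The plan is to substitute the explicit formulas for $b_{(i,j,k)}$ from Corollary~\ref{cor:explicitb} and verify positivity in each regime. Two regimes appear: the ``diagonal'' range $i\le k\le i+j$ (formula~(1)) and the ``off-diagonal'' range $1\le k<i$ (formula~(2)), which I treat separately.

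In the diagonal range, the condition $\ell_1+\cdots+\ell_i=i+j-k\in\{0,1,2\}$ forces every $\ell_r\in\{0,1,2\}$, so only the Bernoulli numbers $B_0=1,\ B_1=-\tfrac12,\ B_2=\tfrac16$ can appear in formula~(1). The three corresponding factors $(-1)^\ell B_\ell/\ell!$ evaluate to $1,\tfrac12,\tfrac1{12}$, all strictly positive. Therefore $b_{(i,j,k)}$ is a sum of products of positive rationals, and direct inspection yields the explicit positive values $b_{(i,1,i+1)}=1$, $b_{(i,1,i)}=i/2$, $b_{(i,2,i+2)}=1$, $b_{(i,2,i+1)}=i/2$, and $b_{(i,2,i)}=i(3i-1)/24$.

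In the off-diagonal range, set $S_p:=\sum_{\ell_1+\cdots+\ell_k=p+i-k,\,\ell_r\ge0}\prod_{r=1}^k(\ell_r+1)^{-1}$. Since the Stirling numbers of the second kind satisfy $\stirling{j}{0}=0$, the $p=0$ term in formula~(2) drops out. Using $\stirling{1}{1}=\stirling{2}{1}=\stirling{2}{2}=1$ one obtains $b_{(i,1,k)}=S_1$ and $b_{(i,2,k)}=S_2-\tfrac12S_1$. Positivity of $b_{(i,1,k)}=S_1$ is immediate since every summand is positive. The only substantive step that remains is to prove $2S_2>S_1$ for all $1\le k<i$.

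For this estimate I would identify $S_p$ with an unsigned Stirling number of the first kind. After the change of variables $m_r=\ell_r+1$, the sum becomes $S_p=\sum_{m_1+\cdots+m_k=p+i,\,m_r\ge1}\prod 1/m_r=[x^{p+i}](-\log(1-x))^k=k!\,c(p+i,k)/(p+i)!$, where $c(n,k)$ denotes the number of permutations of $n$ letters with exactly $k$ cycles. The desired inequality $2S_2>S_1$ thus becomes $2c(i+2,k)>(i+2)c(i+1,k)$; applying the classical recurrence $c(i+2,k)=(i+1)c(i+1,k)+c(i+1,k-1)$ reduces it to $i\,c(i+1,k)+2\,c(i+1,k-1)>0$, which holds since $i\ge2$ in this regime and $c(i+1,k)>0$ for $1\le k\le i+1$. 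Theorem~\ref{Main} then follows directly from Theorem~\ref{theorem:Suzuki}. The main obstacle is precisely this last Stirling estimate; all other cases reduce to bookkeeping with the values of $B_0, B_1, B_2$ and small Stirling numbers.
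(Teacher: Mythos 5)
Your proof is correct, and in the diagonal range $i\le k\le i+j$ and in the case $j=1$, $k<i$ it coincides with the paper's argument (positivity of $(-1)^\ell B_\ell$ for $\ell=0,1,2$, and $b_{(i,1,k)}=S_1>0$, respectively). The genuine divergence is at the crux, namely the inequality $2S_2>S_1$ needed for $b_{(i,2,k)}=S_2-\tfrac12 S_1>0$ when $k<i$. The paper stays inside the composition sums: it regroups the terms of $S_2$ according to which coordinate of a composition of $1+i-k$ is incremented, spreads the subtracted $S_1$ evenly over the $k$ coordinates, and verifies positivity of each local piece via the identity $\tfrac{2}{\ell_s+2}-\tfrac{1}{k(\ell_s+1)}=\tfrac{1}{k}\cdot\tfrac{(2k-1)(\ell_s+1)-1}{(\ell_s+2)(\ell_s+1)}$, with $k=1$ handled by the closed form $b_{(i,2,1)}=\tfrac{i}{2(i+1)(i+2)}$. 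You instead identify $S_p=k!\,c(p+i,k)/(p+i)!$ through the generating function $(-\log(1-x))^k$ (this is really Corollary \ref{cor:log} in disguise, read at $q=p+i-k$), which turns $2S_2>S_1$ into the integer inequality $2c(i+2,k)>(i+2)c(i+1,k)$, dispatched by the recurrence $c(i+2,k)=(i+1)c(i+1,k)+c(i+1,k-1)$ together with $i\ge 2$ and $c(i+1,k)>0$; I checked this reduction and it is valid. Your route buys a cleaner reduction: the only nontrivial input is the Stirling recurrence, and you avoid the multiplicity bookkeeping in the paper's regrouping (a composition of $2+i-k$ arises from incrementing a composition of $1+i-k$ in as many ways as it has nonzero parts, which is exactly where the paper's term-by-term rearrangement must be read with care). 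The paper's route buys elementarity, never leaving the explicit sums, at the cost of a more delicate manipulation. Either way the conclusion feeds into Theorem \ref{theorem:Suzuki} to yield Theorem \ref{Main}.
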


\section{The extension of $b_{(i,j,k)}$}
For later convenience, in this section, we extend the definition of $b_{(i,j,k)} \ (i\geq1, j\geq1, 1\leq k\leq i+j)$ to $b_{(i,j,k)} \ (i\geq1, j\geq0, k\geq1)$ as the following.  
\begin{definition}For $i\geq1$, $j\geq0$, $k\geq1$, we define $b_{(i,j,k)}\in\Q$ by the following recurrence relation. 
\[b_{(1,j,k)}:=\begin{cases}
0&(k>1+j)\\
{}&{}\\
\fr<(-1)^{j+1-k}B_{j+1-k}/(j+1-k)!>&(k\leq1+j)
\end{cases}\]
\[b_{(i,j,k)}:=\sum_{m=0}^{j}{\fr<(-1)^mB_m/m!>b_{(i-1, j+1-m, k)}} \ \ (i\geq2)\] 
\end{definition}
\begin{lemma}\label{lem:zero}For our $b_{(i,j,k)}$ defined as above, we have
\[b_{(i,j,k)}=0\ \ (\text{if} \ k>i+j).\]
In particular, for $i\geq1$, $j\geq1$, and $1\leq k\leq i+j$, $b_{(i,j,k)}$ is exactly equal to $b_{(i,j,k)}$ defined in \cite{Su} $($cf. $(*))$. 
\end{lemma}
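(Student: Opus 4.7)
The plan is to prove the vanishing statement by induction on $i$, then read off the comparison with Suzuki's original definition as an immediate corollary by chasing indices.

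For the induction base case $i=1$, the vanishing $b_{(1,j,k)}=0$ for $k>1+j$ is built directly into the extended definition, so there is nothing to check. For the inductive step, I assume the vanishing holds for $i-1$ and all $j\geq 0$, $k\geq 1$. Then for fixed $j\geq 0$ and $k > i+j$, I expand the recursion
\[
b_{(i,j,k)} \;=\; \sum_{m=0}^{j} \fr<(-1)^m B_m/m!>\, b_{(i-1,\,j+1-m,\,k)}.
\]
For each $m\in\{0,1,\dots,j\}$ I have $k > i+j \geq i+j-m = (i-1)+(j+1-m)$, so the inductive hypothesis forces every summand to vanish. This yields $b_{(i,j,k)}=0$ as required.

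For the ``in particular'' claim, I need to see that when $i\geq 1$, $j\geq 1$, and $1\leq k\leq i+j$, the extended recursion in this section agrees with Suzuki's recursion $(*)$. The only discrepancy between the two is in the upper limit of the summation: the extended version sums $m$ from $0$ to $j$, while $(*)$ sums only up to $\min\{j,\,i+j-k\}$. Because $k\leq i+j$, the upper limit $i+j-k$ is nonnegative; and any ``extra'' term in the extended sum, i.e.\ one with $m>i+j-k$, involves the factor $b_{(i-1,\,j+1-m,\,k)}$ with $(i-1)+(j+1-m) = i+j-m < k$, which is precisely the situation covered by the vanishing just proved. Hence those extra terms contribute zero, and the two recursions produce the same values; combined with the identical base case at $i=1$, an easy second induction on $i$ confirms that the extended $b_{(i,j,k)}$ coincides with Suzuki's for all $(i,j,k)$ in the original range.

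No real obstacle is anticipated; the single subtle point is just to verify the index inequality $(i-1)+(j+1-m)<k$ carefully in the second paragraph so that the first induction can be invoked to kill the extra terms. The argument is otherwise a mechanical double induction.
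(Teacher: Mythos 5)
Your proposal is correct and follows essentially the same route as the paper: induction on $i$ for the vanishing via the inequality $k>i+j\geq (i-1)+(j+1-m)$, and then observing that the extra summands $m>\min\{j,i+j-k\}$ in the extended recursion vanish by that same claim, so the two definitions agree on the original range. The only detail the paper makes explicit that you leave implicit is that the $j=0$ values never enter the recursion for $j\geq 1$ (since $j+1-m\geq 1$ for $0\leq m\leq j$), but your index bookkeeping already covers this.
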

\begin{proof}
We prove the former claim by the induction on $i$. When $i=1$, the claim is clear. For $i\geq2$, by definition, we have
\[b_{(i,j,k)}:=\sum_{m=0}^{j}{\fr<(-1)^mB_m/m!>b_{(i-1, j+1-m, k)}}.\] 
Since $k>i+j\geq (i-1)+(j+1-m) \ \ (0\leq m\leq j)$, we have $b_{(i-1, j+1-m, k)}=0$ by the induction hypothesis. This proves $b_{(i,j,k)}=0\ \ (\text{if} \ k>i+j)$.

For the latter claim, we note $b_{(i,0,k)}$ doesn't affect on the recurrence relation for $i\geq2$. Thus, we only have to show if $i\geq2$, $k\leq i+j$, and $\min\{j, i+j-k\}=i+j-k$ (i.e., $2\leq i<k \leq i+j$), then 
\begin{equation*}\tag{**}
\sum_{m=0}^{i+j-k+1}{\fr<(-1)^mB_m/m!>b_{(i-1, j+1-m, k)}}=0.
\end{equation*}
Since for $i+j-k+1\leq m$, $(i-1)+(j+1-m)=i+j-m\leq k-1<k$, we have $b_{(i-1,j+1-m,k)}=0$ by the former claim. This proves (**). 
\end{proof}
For the later convenience, we set 
\[d_{(i,j,k)}:=(-1)^jj!b_{(i,j,k)}\]
and we will mainly consider the generating function and explicit formula for $d_{(i,j,k)}$ in the subsequent sections. 
By definition, the following is clear. 
\begin{lemma}
For $i\geq1$, $j\geq0$, $k\geq1$, $d_{(i,j,k)}$ satisfies the following recurrence relation; 
\[d_{(1,j,k)}:=\begin{cases}
0&(k>1+j)\\
{}&{}\\
(-1)^jj!\fr<(-1)^{j+1-k}B_{j+1-k}/(j+1-k)!>&(k\leq1+j)
\end{cases}\]
\[d_{(i,j,k)}:=-\fr<1/j+1>\sum_{m=0}^{j}{\binom{j+1}{m}B_md_{(i-1,j+1-m,k)}} \ \ \ (i\geq2)\]
\end{lemma}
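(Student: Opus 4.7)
The plan is to derive the recurrence for $d_{(i,j,k)}$ directly from the defining relation $d_{(i,j,k)} = (-1)^j j! \, b_{(i,j,k)}$ and the recurrence for $b_{(i,j,k)}$ established in the preceding definition. The base case $i=1$ follows instantly: multiplying both cases of the definition of $b_{(1,j,k)}$ by $(-1)^j j!$ produces exactly the stated formula, since the zero case is preserved and the nonzero case is simply rewritten by this normalization.

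For $i \geq 2$, I would invert the normalization to write $b_{(i-1,\, j+1-m,\, k)} = \frac{(-1)^{j+1-m}}{(j+1-m)!}\, d_{(i-1,\, j+1-m,\, k)}$ and substitute into
\[
d_{(i,j,k)} = (-1)^j j! \sum_{m=0}^{j} \frac{(-1)^m B_m}{m!}\, b_{(i-1,\, j+1-m,\, k)}.
\]
The three sign factors $(-1)^j$, $(-1)^m$, and $(-1)^{j+1-m}$ collapse to $(-1)^{2j+1} = -1$ independently of $m$, and the factorials combine via $\frac{j!}{m!(j+1-m)!} = \frac{1}{j+1}\binom{j+1}{m}$, producing the claimed recurrence.

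The computation is purely bookkeeping, and there is no genuine obstacle: the lemma is just the reparameterization of the original recurrence under the rescaling $b \mapsto d$. The only point requiring minor care is tracking the three sign factors consistently, and remembering that the shifted index $j+1-m$ (rather than $j$) governs the $d$-normalization on the right-hand side of the substitution.
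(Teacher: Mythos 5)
Your proposal is correct and matches the paper's treatment: the paper states this lemma with no proof beyond ``By definition, the following is clear,'' and your calculation is exactly the routine verification being alluded to. The sign collapse to $(-1)^{2j+1}=-1$ and the identity $\frac{j!}{m!(j+1-m)!}=\frac{1}{j+1}\binom{j+1}{m}$ are the only points of substance, and you handle both correctly.
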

\begin{remark}\label{rem:zero}
By definition and Lemma \ref{lem:zero}, if $k>i+j$, then $d_{(i,j,k)}=0$. 
\end{remark}

\section{The generating function of $d_{(i,j,k)}$}
In this section, we consider the generating function $D_{(i,k)}(t)$ of $d_{(i,j,k)}$;
\[D_{(i,k)}(t):=\sum_{j=0}^{\infty}{d_{(i,j,k)}\fr<t^j/j!>}.\] 

\begin{lemma}\label{lem:relation}
\[D_{(1,k)}(t)=\fr<(-t)^k/1-e^t>\]
\[D_{(i,k)}(t)=\fr<1/1-e^t>(-d_{(i-1,0,k)}+D_{(i-1,k)}(t)) \ \ (\text{if} \ i\geq2).\]
As a consequence, 
\[D_{(i,k)}(t)=\begin{cases}
\fr<(-t)^k/(1-e^t)^i> & (\text{if} \ k\geq i)\\
{}&{}\\
\left(\fr<1/1-e^t>\right)^{i-k}\left(\left(\fr<-t/1-e^t>\right)^k-\displaystyle\sum_{r=0}^{i-k-1}{d_{(r+k,0,k)}(1-e^t)^r}\right) & (\text{if} \ k<i)
\end{cases}
\]
\end{lemma}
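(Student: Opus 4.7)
The plan is to verify the three claimed formulas in sequence: the explicit form of $D_{(1,k)}(t)$, the recurrence $D_{(i,k)} = \tfrac{1}{1-e^t}(-d_{(i-1,0,k)}+D_{(i-1,k)})$, and then the two cases of the final closed form obtained by iterating this recurrence.

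First I would compute $D_{(1,k)}(t)$ directly from the definition. Since $d_{(1,j,k)} = 0$ for $j < k-1$ while for $j \geq k-1$ one has $d_{(1,j,k)} = (-1)^{2j+1-k} j!\,B_{j+1-k}/(j+1-k)! = -(-1)^k\, j!\,B_{j+1-k}/(j+1-k)!$, the substitution $n = j+1-k$ rewrites
\[
D_{(1,k)}(t) = -(-1)^k\, t^{k-1}\sum_{n=0}^{\infty} \frac{B_n}{n!}\,t^n = -(-1)^k\, t^{k-1}\cdot\frac{t}{e^t-1} = \frac{(-t)^k}{1-e^t}.
\]

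Next, for the recurrence, I would multiply the defining relation $(j+1)\,d_{(i,j,k)} = -\sum_{m=0}^{j}\binom{j+1}{m}B_m\,d_{(i-1,j+1-m,k)}$ by $t^{j+1}/(j+1)!$, obtaining
\[
\frac{d_{(i,j,k)}\,t^{j+1}}{j!} = -\sum_{m=0}^{j}\frac{B_m t^m}{m!}\cdot\frac{d_{(i-1,j+1-m,k)}\,t^{j+1-m}}{(j+1-m)!}.
\]
Summing over $j\geq 0$, the left-hand side is $t\,D_{(i,k)}(t)$; on the right, reindexing $n = j+1-m$ and noting that $j\geq m$ forces $n\geq 1$, a Cauchy-product identification gives $-\tfrac{t}{e^t-1}\bigl(D_{(i-1,k)}(t) - d_{(i-1,0,k)}\bigr)$, where the subtraction accounts for the missing $n=0$ term. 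Dividing by $t$ and using $-1/(e^t-1) = 1/(1-e^t)$ yields the stated recurrence.

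For the closed form I would split into two regimes. If $k \geq i$, Remark \ref{rem:zero} forces $d_{(i-1,0,k)} = 0$ whenever $2\leq i\leq k$, so the recurrence collapses to $D_{(i,k)}(t) = (1-e^t)^{-1}D_{(i-1,k)}(t)$, and induction on $i$ from the base case delivers $D_{(i,k)}(t) = (-t)^k/(1-e^t)^i$. If $k < i$, starting from the now-established $D_{(k,k)}(t) = (-t)^k/(1-e^t)^k$ and iterating the recurrence $i-k$ more times picks up boundary terms $-d_{(s,0,k)}/(1-e^t)^{i-s}$ for $s = k, k+1, \ldots, i-1$, producing
\[
D_{(i,k)}(t) = \frac{(-t)^k}{(1-e^t)^i} - \sum_{r=0}^{i-k-1}\frac{d_{(r+k,0,k)}}{(1-e^t)^{i-r-k}}
\]
after setting $r = s - k$; factoring $(1-e^t)^{-(i-k)}$ out of every summand then gives the stated formula.

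I expect the main obstacle to be nothing more than careful bookkeeping in the iteration for $k < i$: keeping track of which boundary term $d_{(s,0,k)}$ enters at which level of unrolling and of the correct exponent of $1-e^t$ it acquires. The only genuinely analytic ingredient is the defining identity $\sum_{m\geq 0} B_m t^m/m! = t/(e^t-1)$, which is what converts the combinatorial recurrence into a clean multiplicative recurrence in the variable $1/(1-e^t)$.
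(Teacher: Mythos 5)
Your proposal is correct and follows essentially the same route as the paper: direct index-shift for $D_{(1,k)}$, a Cauchy-product identification (with the $n=0$ term subtracted) for the recurrence, and unrolling the recurrence while discarding the boundary terms $d_{(s,0,k)}$ that vanish for $s<k$ by Remark \ref{rem:zero}. The only cosmetic difference is that you stop the iteration at level $k$ in the case $k<i$, whereas the paper unrolls all the way to $i=1$ and then drops the vanishing terms.
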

\begin{proof}
Note
\[d_{(1,j,k)}:=\begin{cases}
0&(k>1+j)\\
{}&{}\\
(-1)^jj!\fr<(-1)^{j+1-k}B_{j+1-k}/(j+1-k)!>&(k\leq1+j)
\end{cases}.\]
Then, we can compute $D_{(1,k)}(t)$ as the following; 
\begin{align*}
D_{(1,k)}(t)&=\sum_{j=0}^{\infty}{d_{(1,j,k)}\fr<t^j/j!>}=\sum_{j=k-1}^{\infty}{d_{(1,j,k)}\fr<t^j/j!>}\\
&=\sum_{j=k-1}^{\infty}{(-1)^jj!\fr<(-1)^{j+1-k}B_{j+1-k}/(j+1-k)!>\fr<t^j/j!>}\\
&=\sum_{q=0}^{\infty}{(-1)^{q+k-1}\fr<(-1)^qB_q/q!>t^{q+k-1}}\\
&=(-t)^{k-1}\sum_{q=0}^{\infty}{B_q\fr<t^q/q!>}=\fr<(-t)^k/1-e^t>.
\end{align*}
On the other hand, for $i\geq2$, we note
\[d_{(i,j,k)}:=-\fr<1/j+1>\sum_{m=0}^{j}{\binom{j+1}{m}B_md_{(i-1,j+1-m,k)}} \ \ \ (i\geq2).\]
Then, we can compute $D_{(i,k)}(t)$ as the following; 
\begin{align*}
D_{(i,k)}(t)&=-\sum_{j=0}^{\infty}{\fr<1/j+1>\left(\sum_{m=0}^j{\binom{j+1}{m}B_md_{(i-1,j+1-m,k)}}\right)\fr<t^j/j!>}\\
&=-\left(\sum_{j=0}^{\infty}{B_j\fr<t^j/j!>}\right)\left(\sum_{j=0}^{\infty}{d_{(i-1,j+1,k)}\fr<t^j/(j+1)!>}\right)\\
&=-\left(\fr<t/e^t-1>\right)\left(\fr<1/t>\sum_{j=0}^{\infty}{d_{(i-1,j+1,k)}\fr<t^{j+1}/(j+1)!>}\right)\\
&=\fr<1/1-e^t>\left(D_{(i-1,k)}(t)-d_{(i-1,0,k)}\right).
\end{align*}
In addition, we use the recurrence relation above repeatedly; then $D_{(i,k)}(t)$ can be computed as follows. 
\begin{align*}
D_{(i,k)}(t)&=\fr<1/1-e^t>(-d_{(i-1,0,k)}+D_{(i-1,k)}(t))\\
&=\fr<1/1-e^t>\left\{-d_{(i-1,0,k)}+\fr<1/1-e^t>\left\{-d_{(i-2,0,k)}+\cdots+\fr<1/1-e^t>\left\{-d_{(1,0,k)}+D_{(1,k)}(t)\right\}\cdots\right\}\right\}\\
&=\fr<-d_{(i-1,0,k)}/1-e^t>+\fr<-d_{(i-2,0,k)}/(1-e^t)^2>+\cdots+\fr<-d_{(1,0,k)}/(1-e^t)^{i-1}>+\fr<D_{(1,k)}(t)/(1-e^t)^{i-1}>
\end{align*}
Since by Remark \ref{rem:zero}, $d_{(k-1,0,k)}=d_{(k-2,0,k)}=\cdots=d_{(1,0,k)}=0$, we obtain the following. 

If $k\geq i$, then
\[D_{(i,k)}(t)=\fr<D_{(1,k)}(t)/(1-e^t)^{i-1}>=\fr<(-t)^k/(1-e^t)^i>.\]

If $k<i$, then
\begin{align*}
D_{(i,k)}(t)&=\fr<-d_{(i-1,0,k)}/1-e^t>+\fr<-d_{(i-2,0,k)}/(1-e^t)^2>+\cdots+\fr<-d_{(k,0,k)}/(1-e^t)^{i-k}>+\fr<D_{(1,k)}(t)/(1-e^t)^{i-1}>\\
&=\fr<-d_{(i-1,0,k)}/1-e^t>+\fr<-d_{(i-2,0,k)}/(1-e^t)^2>+\cdots+\fr<-d_{(k,0,k)}/(1-e^t)^{i-k}>+\fr<(-t)^k/(1-e^t)^{i}>\\
&=\left(\fr<1/1-e^t>\right)^{i-k}\left(\left(\fr<-t/1-e^t>\right)^k-\sum_{r=0}^{i-k-1}{d_{(r+k,0,k)}(1-e^t)^r}\right).
\end{align*}
These complete the proof. 
\end{proof}

For $i>k$, by using the lemma above and the fact $t=\log(1-(1-e^t))$, we have
\begin{align*}
D_{(i,k)}(t)&=\left(\fr<1/1-e^t>\right)^{i-k}\left(\left(\fr<-t/1-e^t>\right)^k-\sum_{r=0}^{i-k-1}{d_{(r+k,0,k)}(1-e^t)^r}\right)\\
&=\left(\fr<1/1-e^t>\right)^{i-k}\left(\left(-\fr<\log(1-(1-e^t))/1-e^t>\right)^k-\sum_{r=0}^{i-k-1}{d_{(r+k,0,k)}(1-e^t)^r}\right)\\
&=\fr<f_k(1-e^t)-\displaystyle\sum_{r=0}^{i-k-1}{d_{(r+k,0,k)}(1-e^t)^r}/(1-e^t)^{i-k}>,\\
\end{align*}
where $f_k(s):=\left(-\fr<\log(1-s)/s>\right)^k$ (Note $f_k(1-e^t)$ is well-defined since $1-e^t$ doesn't have constant term). 
In this setting, as shown in the following general lemma, we can prove 
\[\sum_{q=0}^{\infty}{d_{(q+k,0,k)}s^{q}}=\left(-\fr<\log(1-s)/s>\right)^k.\]

\begin{lemma}Let $f(s):=\displaystyle\sum_{n=0}^{\infty}{c_ns^n}$ be the generating function of sequence $\{c_n\}_{n\geq0}$. For a finite sequence $d_0, d_1, \ldots, d_{\ell-1}$, we consider the following formal Laurent series $F_{\ell}(t)\in\C[[t]][\fr<1/t>]$; 
\[F_\ell(t):=\fr<f(1-e^t)-d_0-d_1(1-e^t)-\cdots-d_{\ell-1}(1-e^t)^{\ell-1}/(1-e^t)^{\ell}>.\]
The followings are equivalent; 
\begin{itemize}
\item[(i)]$F_\ell(t)\in\C[[t]]$ i.e., $F_\ell(t)$ doesn't have any negative term. 
\item[(ii)]$c_n=d_n \ \ (0\leq n\leq\ell-1).$
\end{itemize}
\end{lemma}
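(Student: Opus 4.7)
The plan is to recognize the denominator $(1-e^t)^\ell$ as the $\ell$-th power of a formal series with $t$-valuation exactly $1$, and to reduce the stated equivalence to an elementary change of formal variable. First I would set $s := 1-e^t$ and consider the $\C$-algebra homomorphism $\iota \colon \C[[s]] \to \C[[t]]$ sending $s$ to $1-e^t = -t - t^2/2 - t^3/6 - \cdots$. Since the leading term $-t$ has $t$-valuation $1$ and unit coefficient, $\iota$ is the standard example of an isomorphism of complete local rings; it extends to an isomorphism of Laurent series rings $\C((s)) \to \C((t))$ and preserves the natural valuation, in the sense that $\ord_t \iota(\phi) = \ord_s \phi$ for every nonzero $\phi \in \C((s))$.

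Next I would introduce the auxiliary series
\[g(s) := f(s) - \sum_{n=0}^{\ell-1} d_n s^n = \sum_{n=0}^{\ell-1}(c_n - d_n)s^n + \sum_{n \geq \ell} c_n s^n,\]
so that by construction $F_\ell(t) = \iota\bigl(g(s)/s^\ell\bigr)$. Condition (i), namely $F_\ell(t) \in \C[[t]]$, is exactly $\ord_t F_\ell(t) \geq 0$, and by the valuation-preserving property of $\iota$ this is equivalent to $\ord_s\bigl(g(s)/s^\ell\bigr) \geq 0$, i.e., $\ord_s g(s) \geq \ell$.

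Finally, unpacking the explicit expansion of $g(s)$ above, the inequality $\ord_s g(s) \geq \ell$ is equivalent to the vanishing of the coefficients of $s^0, s^1, \ldots, s^{\ell-1}$ in $g(s)$, which means $c_n - d_n = 0$ for $0 \leq n \leq \ell-1$; this is (ii). There is essentially no obstacle: once the substitution $s = 1-e^t$ is in place the proof is pure bookkeeping, and the only background fact needed is the standard assertion that substituting a power series with unit linear coefficient into another power series induces an isomorphism of complete local rings which preserves adic valuations and therefore extends to Laurent series.
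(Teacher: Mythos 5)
Your proof is correct and rests on the same key fact as the paper's: $1-e^t$ has $t$-adic order exactly $1$, so substituting $s\mapsto 1-e^t$ preserves order and condition (i) translates into $\ord_s\bigl(f(s)-\sum_{n<\ell}d_ns^n\bigr)\geq\ell$, i.e.\ condition (ii). The paper arrives at the same conclusion by hand --- decomposing $F_\ell(t)$ into a power-series part plus $\left(\frac{t}{1-e^t}\right)^{\ell}$ times the defect polynomial divided by $t^\ell$, and identifying the leading negative coefficient as $c_{i_0}-d_{i_0}$ --- whereas you package the identical mechanism as the standard valuation-preserving isomorphism $\C((s))\to\C((t))$; both are valid and essentially the same argument.
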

\begin{proof}
To find the negative leading term of $F_{\ell}(t)$ (i.e., $a_0(\neq0)$ if $F_{m}(t)=\fr<a_0/t^{m}>+\fr<a_1/t^{m-1}>+\cdots$ for some $m>0$, and 0 otherwise), we can compute:  
\begin{align*}
&F_\ell(t)=\fr<f(1-e^t)-d_0-d_1(1-e^t)-\cdots-d_{\ell-1}(1-e^t)^{\ell-1}/(1-e^t)^{\ell}>\\
&=\fr<(1-e^t)^{\ell}\left(\displaystyle\sum_{n=0}^{\infty}{c_{n+\ell}(1-e^t)^n}\right)+\displaystyle\sum_{r=0}^{\ell-1}{(c_r-d_r)(1-e^t)^r}/(1-e^t)^{\ell}>\\
&=\left(\displaystyle\sum_{n=0}^{\infty}{c_{n+\ell}(1-e^t)^n}\right)+\left(\fr<t/1-e^t>\right)^\ell\left(\fr<\displaystyle\sum_{r=0}^{\ell-1}{(c_r-d_r)(1-e^t)^r}/t^\ell>\right)
\end{align*}
Since we have
\[\left(\fr<t/1-e^t>\right)^\ell=\left(\sum_{n=0}^{\infty}{B_n\fr<t^n/n!>}\right)^\ell=1+t(\cdots) \]
and $1-e^t=-t-\fr<t^2/2!>-\cdots$, the negative leading term of $F_\ell(t)$ is given by $c_{i_0}-d_{i_0}$, where $i_0:=\min\{ i\in \{0,1,\ldots, \ell-1\} \ | \ c_i-d_i\neq0\}$. 
In particular, this means the equivalence of condition (i) and (ii). 
\end{proof}
\begin{corollary}\label{cor:log}(The generating function and an explicit formula for $d_{(q+k,0,k)}$)\\
\[\sum_{q=0}^{\infty}{d_{(q+k,0,k)}s^{q}}=\left(-\fr<\log(1-s)/s>\right)^k\]
Moreover, 
\[d_{(q+k,0,k)}=\sum_{\substack{\ell_1+\cdots+\ell_k=q \\ \forall r, \ell_r\geq0}}{\fr<1/(\ell_1+1)\cdots(\ell_k+1)>}.\]
\end{corollary}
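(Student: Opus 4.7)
\medskip

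The plan is to apply the general lemma just established to the formal Laurent series expression for $D_{(i,k)}(t)$ coming from Lemma \ref{lem:relation}.

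\medskip

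First I would fix $k \geq 1$ and let $i > k$ be arbitrary. By the computation preceding the general lemma, we have
\[
D_{(i,k)}(t)=\fr<f_k(1-e^t)-\displaystyle\sum_{r=0}^{i-k-1}{d_{(r+k,0,k)}(1-e^t)^r}/(1-e^t)^{i-k}>,
\]
where $f_k(s)=\left(-\fr<\log(1-s)/s>\right)^k$. This is exactly $F_{\ell}(t)$ from the general lemma with $\ell=i-k$, $f=f_k$, and the chosen finite sequence $d_r=d_{(r+k,0,k)}$ for $0\leq r\leq i-k-1$. On the other hand, by its very definition as a generating function in $t$, $D_{(i,k)}(t)\in\C[[t]]$; hence condition (i) of the general lemma holds. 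Applying the lemma, condition (ii) yields $c_r=d_{(r+k,0,k)}$ for all $0\leq r\leq i-k-1$, where $c_r$ is the coefficient of $s^r$ in $f_k(s)$. Since $i$ can be taken arbitrarily large, this identification holds for every $r\geq 0$, which is exactly the generating function identity
\[
\sum_{q=0}^{\infty}{d_{(q+k,0,k)}s^{q}}=\left(-\fr<\log(1-s)/s>\right)^k.
\]

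\medskip

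For the explicit formula, I would expand $-\log(1-s)/s$ as a power series. Since $-\log(1-s)=\sum_{\ell=1}^{\infty}\fr<s^\ell/\ell>$, we get $-\log(1-s)/s=\sum_{\ell=0}^{\infty}\fr<s^\ell/\ell+1>$. Raising to the $k$-th power and collecting the coefficient of $s^q$ by a standard Cauchy product gives
\[
[s^q]\left(-\fr<\log(1-s)/s>\right)^k=\sum_{\substack{\ell_1+\cdots+\ell_k=q\\ \forall r,\ \ell_r\geq 0}}\fr<1/(\ell_1+1)\cdots(\ell_k+1)>,
\]
which combined with the generating function identity above yields the claimed formula.

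\medskip

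There is no real obstacle in this proof: the main technical content has already been absorbed by Lemma \ref{lem:relation} (which expresses $D_{(i,k)}$ in closed form for $i>k$) and by the general lemma about when $F_\ell(t)$ is a power series. The only thing one needs to be careful about is the book-keeping when identifying $D_{(i,k)}(t)$ with $F_{i-k}(t)$, and the observation that letting $i\to\infty$ converts the finite-range matching $c_r=d_r$ into the infinite-range statement; once this is set up cleanly the result is immediate.
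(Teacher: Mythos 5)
Your proposal is correct and follows essentially the same route as the paper: identify $D_{(i,k)}(t)$ with $F_{i-k}(t)$ from the general lemma, use that $D_{(i,k)}(t)\in\C[[t]]$ to force $c_r=d_{(r+k,0,k)}$ for $0\leq r\leq i-k-1$ and let $i$ be arbitrary, then expand $\left(-\fr<\log(1-s)/s>\right)^k$ by the Cauchy product. If anything, your write-up makes the two points the paper leaves implicit (why condition (i) holds, and the passage from finite-range to infinite-range matching) slightly more explicit.
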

\begin{proof}
The first claim follows from Lemma \ref{lem:relation} if we take $f$ and $d_r$ as the following; 
\[f(s)=\left(-\fr<\log(1-s)/s>\right)^{k}, \ \ \ \ \ d_r=d_{(k+r,0,k)} \ (0\leq r\leq \ell-1:=i-k-1).\]
For the second claim, we note 
\[-\fr<\log(1-s)/s>=\sum_{q=0}^{\infty}{\fr<s^{q}/q+1>}=\sum_{q=0}^{\infty}{\fr<q!/q+1>\fr<s^q/q!>}.\]
Then, 
\begin{align*}
\sum_{q=0}^{\infty}{d_{(q+k,0,k)}s^{q}}&=\left(-\fr<\log(1-s)/s>\right)^k\\
&=\sum_{q=0}^{\infty}{\left(\sum_{\ell_1+\cdots+\ell_k=q}{\binom{q}{\ell_1, \ldots, \ell_k}\fr<\ell_1!/(\ell_1+1)>\cdots\fr<\ell_k!/(\ell_k+1)>}\right)\fr<s^q/q!>}\\
&=\sum_{q=0}^{\infty}{\left(\sum_{\ell_1+\cdots+\ell_k=q}{\fr<1/(\ell_1+1)\cdots(\ell_k+1)>}\right)s^q},
\end{align*}
where
\[\binom{n}{\ell_1, \ldots ,\ell_i}:=\fr<n!/\ell_1!\cdots\ell_i!>\]
is the multinomial coefficient.  
This completes the proof. 
\end{proof}

\begin{remark}\label{Daehee}
$d_{(q+k,0,k)}$ can be expressed by the {\it higher order Daehee number} $D_{q}^{(k)}$ as the following. 
\[d_{(q+k,0,k)}=(-1)^q\fr<D_q^{(k)}/q!>,\]
where $D_q^{(k)}$ is defined by the following generating function (cf. \cite{Kim});  
\[\left(\fr<\log(1+t)/t>\right)^k=\sum_{q=0}^{\infty}{D_q^{(k)}\fr<t^q/q!>}\]
\end{remark}
Using this corollary and Lemma \ref{lem:relation}, we can compute the explicit formula for $d_{(i,j,k)}$ as the following. 
To represent it explicitly, we recall that the Stirling numbers of the second kind $\stirling{j}{p} \ (j\geq0, \ 0\leq p\leq j)$ is defined  by the following generating function;  
\[\fr<(e^t-1)^p/p!>=\sum_{j=p}^{\infty}{\stirling{j}{p}\fr<t^j/j!>}\]

\begin{proposition}
\begin{itemize}
\item[(1)]When $k\geq i$, 
\[d_{(i,j,k)}=\begin{cases}
0&(\text{if} \ k>i+j)\\
(-1)^{j}j!\displaystyle\sum_{\substack{\ell_1+\cdots+\ell_i=i+j-k\\ \forall r, \ell_r\geq0}}{\fr<(-1)^{\ell_1}B_{\ell_1}/\ell_1!>\times\cdots\times\fr<(-1)^{\ell_i}B_{\ell_i}/\ell_i!>}&(\text{if} \ i\leq k\leq i+j)
\end{cases}\]
\item[(2)]When $k<i$,  
\begin{align*}
d_{(i,j,k)}&=\sum_{p=0}^{j}{(-1)^pp!\stirling{j}{p}\left(\sum_{\substack{\ell_1+\cdots\ell_k=p+i-k\\ \forall r, \ell_r\geq0}}{\fr<1/(\ell_1+1)\cdots(\ell_k+1)>}\right)}
\end{align*}
\end{itemize}
\end{proposition}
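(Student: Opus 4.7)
The plan is to extract the coefficient of $t^j/j!$ from the closed-form expressions for $D_{(i,k)}(t)$ recorded in Lemma \ref{lem:relation}, splitting into the two cases $k\geq i$ and $k<i$ in parallel with that lemma. Each case reduces to standard generating-function manipulation, with Bernoulli numbers appearing naturally in case (1) and Stirling numbers of the second kind in case (2).

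For case (1), with $k\geq i$, I would rewrite
\[D_{(i,k)}(t)=\frac{(-t)^k}{(1-e^t)^i}=(-1)^{k-i}t^{k-i}\left(\frac{t}{e^t-1}\right)^i=(-1)^{k-i}t^{k-i}\left(\sum_{n=0}^{\infty}B_n\frac{t^n}{n!}\right)^i,\]
so the coefficient of $t^j$ equals $(-1)^{k-i}\sum_{\ell_1+\cdots+\ell_i=i+j-k}\prod_{r}B_{\ell_r}/\ell_r!$. Multiplying by $j!$ gives $d_{(i,j,k)}$, and the sign $(-1)^{k-i}$ can be absorbed using $(-1)^{k-i}=(-1)^{j}\prod_{r}(-1)^{\ell_r}$, which is valid because $\sum_{r}\ell_r=i+j-k$. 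This produces the claimed formula. When $k>i+j$, the constraint $\sum_r\ell_r=i+j-k<0$ with $\ell_r\geq 0$ is infeasible, so $d_{(i,j,k)}=0$, consistent with Remark \ref{rem:zero}.

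For case (2), with $k<i$, combining Lemma \ref{lem:relation} with the identity $f_k(s)=\sum_{q\geq 0}d_{(q+k,0,k)}s^q$ from Corollary \ref{cor:log} lets me cancel the first $i-k$ terms in the numerator of $D_{(i,k)}(t)$, leaving $\sum_{q\geq i-k}d_{(q+k,0,k)}(1-e^t)^q$. After the substitution $p=q-(i-k)$ this simplifies to
\[D_{(i,k)}(t)=\sum_{p=0}^{\infty}d_{(p+i,0,k)}(1-e^t)^p.\]
I would then expand $(1-e^t)^p=(-1)^p p!\sum_{j\geq p}\stirling{j}{p}t^j/j!$ via the generating function defining the Stirling numbers of the second kind, read off the coefficient of $t^j/j!$ to obtain $d_{(i,j,k)}=\sum_{p=0}^{j}(-1)^p p!\stirling{j}{p}d_{(p+i,0,k)}$, and finally substitute the explicit form $d_{(p+i,0,k)}=\sum_{\ell_1+\cdots+\ell_k=p+i-k}1/\prod_{r}(\ell_r+1)$ supplied by Corollary \ref{cor:log}.

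The whole argument is routine once the generating-function closed forms of Lemma \ref{lem:relation} and Corollary \ref{cor:log} are in hand; the main obstacle is simply careful bookkeeping of signs in case (1) and of the index shift $q\mapsto p$ in case (2).
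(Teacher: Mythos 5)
Your proposal is correct and follows essentially the same route as the paper's own proof: case (1) by expanding $(-t)^{k-i}\bigl(t/(e^t-1)\bigr)^i$ multinomially and absorbing the sign via $\sum_r \ell_r = i+j-k$, and case (2) by cancelling the first $i-k$ terms using Corollary \ref{cor:log}, shifting the index, and expanding $(1-e^t)^p$ through the Stirling-number generating function. No gaps; the bookkeeping you flag (signs in (1), the shift $q\mapsto p$ in (2)) is exactly what the paper carries out.
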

\begin{proof}
(1) As noted in Lemma \ref{lem:relation}, for $k\geq i$ we have  
\[D_{(i,k)}(t):=\sum_{j=0}^{\infty}{d_{(i,j,k)}\fr<t^j/j!>}=\fr<(-t)^k/(1-e^t)^i>=(-t)^{k-i}\left(\fr<t/e^t-1>\right)^i.\]
Thus, we can compute the explicit formula for $d_{(i,j,k)}$ as the following 
\begin{align*}
D_{(i,k)}(t)&=(-t)^{k-i}\left(\sum_{\ell=0}^{\infty}{B_{\ell}\fr<t^{\ell}/\ell!>}\right)^i\\
&=(-t)^{k-i}\sum_{n=0}^{\infty}\left(\sum_{\ell_1+\cdots+\ell_i=n}{\binom{n}{\ell_1, \ldots ,\ell_i}B_{\ell_1}\cdots B_{\ell_i}}\right)\fr<t^n/n!>\\
&=(-1)^{k-i}\sum_{n=0}^{\infty}\left(\sum_{\ell_1+\cdots+\ell_i=n}{\binom{n}{\ell_1, \ldots ,\ell_i}B_{\ell_1}\cdots B_{\ell_i}}\right)\fr<t^{n+k-i}/n!>\\
&=(-1)^{k-i}\sum_{j=k-i}^{\infty}{\left(\sum_{\ell_1+\cdots+\ell_i=j-(k-i)}{\binom{j-(k-i)}{\ell_1, \ldots, \ell_i}B_{\ell_1}\cdots B_{\ell_i}}\right)\fr<t^j/(j-(k-i))!>}\\
&=(-1)^{k-i}\sum_{j=k-i}^{\infty}{j!\left(\sum_{\ell_1+\cdots+\ell_i=j-(k-i)}{\fr<B_{\ell_1}/\ell_1!>\times\cdots\times\fr<B_{\ell_i}/\ell_i!>}\right)\fr<t^j/j!>}\\
&=\sum_{j=k-i}^{\infty}{(-1)^jj!\left(\sum_{\ell_1+\cdots+\ell_i=j-(k-i)}{\fr<(-1)^{\ell_1}B_{\ell_1}/\ell_1!>\times\cdots\times\fr<(-1)^{\ell_i}B_{\ell_i}/\ell_i!>}\right)\fr<t^j/j!>}
\end{align*}
This completes the proof. 

(2)\ For $k<i$, by Lemma \ref{lem:relation} and Corollary \ref{cor:log}, we have 
\begin{align*}
D_{(i,k)}(t)&=\left(\fr<1/1-e^t>\right)^{i-k}\left(\left(\sum_{q=0}^{\infty}{d_{(q+k,0,k)}(1-e^t)^{q}}\right)-\sum_{r=0}^{i-k-1}{d_{(r+k,0,k)}(1-e^t)^r}\right)\\
&=\left(\fr<1/1-e^t>\right)^{i-k}\left(\sum_{q=i-k}^{\infty}{d_{(q+k,0,k)}(1-e^t)^{q}}\right)\\
&=\left(\fr<1/1-e^t>\right)^{i-k}(1-e^t)^{i-k}\left(\sum_{p=0}^{\infty}{d_{(p+i,0,k)}(1-e^t)^p}\right)\\
&=\sum_{p=0}^{\infty}{d_{(p+i,0,k)}(-1)^pp!\times\fr<(e^t-1)^p/p!>}\\
&=\sum_{p=0}^{\infty}{\left(d_{(p+i,0,k)}(-1)^pp!\sum_{j=p}^{\infty}{\stirling{j}{p}}\fr<t^j/j!>\right)}\\
&=\sum_{j=0}^{\infty}{\left(\sum_{p=0}^{j}{d_{(p+i,0,k)}(-1)^pp!\stirling{j}{p}}\right)\fr<t^j/j!>}.
\end{align*}
Thus we obtain 
\[d_{(i,j,k)}=\sum_{p=0}^{j}{d_{(p+i,0,k)}(-1)^pp!\stirling{j}{p}}.\]
Finally, by Corollary \ref{cor:log}, we obtain the following; 

\[d_{(i,j,k)}=\sum_{p=0}^{j}{\left(\sum_{\ell_1+\cdots\ell_k=p+i-k}{\fr<1/(\ell_1+1)\cdots(\ell_k+1)>}\right)(-1)^pp!\stirling{j}{p}}\]
\end{proof}
\begin{remark}For $i\leq k\leq i+j$, $d_{(i,j,k)}$ can be expressed by the {\it higher order Bernoulli number} $B_n^{(i)}$ as the following (cf. \cite{Kim}).
\[d_{(i,j,k)}=(-1)^{k-i}B_{i+j-k}^{(i)}\fr<j!/(i+j-k)!>,\]
where $B_n^{(i)}$ is defined by the following generating function. 
\[\left(\fr<t/e^t-1>\right)^i=\sum_{n=0}^{\infty}{B_n^{(i)}\fr<t^n/n!>}\] 
\end{remark}
From the above Proposition, we obtainan explicit formula for $b_{(i,j,k)}:=\fr<d_{(i,j,k)}/(-1)^jj!>$. 
\begin{corollary}\label{cor:explicitb}(An explicit formula for $b_{(i,j,k)}$)\\
For $i, j\geq1$ and $1\leq k\leq i+j$, we have an explicit formula for $b_{(i,j,k)}$ as the following;   
\begin{itemize}
\item[(1)]When $i\leq k\leq i+j$, 
\[b_{(i,j,k)}=\sum_{\substack{\ell_1+\cdots+\ell_i=i+j-k\\ \forall r, \ell_r\geq0}}{\fr<(-1)^{\ell_1}B_{\ell_1}/\ell_1!>\times\cdots\times\fr<(-1)^{\ell_i}B_{\ell_i}/\ell_i!>}\]
\item[(2)]When $k<i$, 
\[b_{(i,j,k)}=\fr<1/(-1)^{j}j!>\sum_{p=0}^{j}{(-1)^pp!\stirling{j}{p}\left(\sum_{\substack{\ell_1+\cdots+\ell_k=p+i-k\\ \forall r, \ell_r\geq0}}{\fr<1/(\ell_1+1)\cdots(\ell_k+1)>}\right)}.\]
\end{itemize}
\end{corollary}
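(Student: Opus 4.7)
The plan is to derive this corollary as a direct consequence of the explicit formula for $d_{(i,j,k)}$ established in the preceding Proposition, using only the definitional identity $d_{(i,j,k)} = (-1)^j j! \, b_{(i,j,k)}$, equivalently $b_{(i,j,k)} = d_{(i,j,k)} / ((-1)^j j!)$. Since the hypothesis here is $i,j \geq 1$ and $1 \leq k \leq i+j$, the two regimes $i \leq k \leq i+j$ and $1 \leq k < i$ cover all cases, matching exactly the two cases of the Proposition (noting that the top edge $k = i+j$ in the Proposition's case (1) is consistent because the sum over $\ell_1 + \cdots + \ell_i = 0$ reduces to the single term $B_0^i / 1 = 1$, and in the $k<i$ case the inner sum has $p+i-k \geq 1$, so no degenerate indexing arises).

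For case (1), when $i \leq k \leq i+j$, I would simply divide the Proposition's formula
\[
d_{(i,j,k)} = (-1)^j j! \sum_{\substack{\ell_1+\cdots+\ell_i = i+j-k \\ \forall r, \ell_r \geq 0}} \frac{(-1)^{\ell_1} B_{\ell_1}}{\ell_1!} \times \cdots \times \frac{(-1)^{\ell_i} B_{\ell_i}}{\ell_i!}
\]
by $(-1)^j j!$; the prefactor cancels cleanly and the stated formula (1) drops out.

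For case (2), when $k < i$, the Proposition gives
\[
d_{(i,j,k)} = \sum_{p=0}^{j} (-1)^p p! \stirling{j}{p} \left( \sum_{\substack{\ell_1 + \cdots + \ell_k = p + i - k \\ \forall r, \ell_r \geq 0}} \frac{1}{(\ell_1+1) \cdots (\ell_k+1)} \right),
\]
and here there is no factor of $(-1)^j j!$ to absorb, so dividing by $(-1)^j j!$ yields exactly the stated formula (2), with the prefactor $1/((-1)^j j!)$ placed outside the outer sum.

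There is no substantive obstacle to surmount, because all the analytic and combinatorial work has already been carried out: the generating function computation in Lemma \ref{lem:relation}, the identification $\sum_q d_{(q+k,0,k)} s^q = (-\log(1-s)/s)^k$ from Corollary \ref{cor:log}, and the Stirling/Bernoulli expansions in the Proposition together already encode the content. The proof is therefore a one-line rewriting, and the only point worth flagging explicitly is that the identity $d_{(i,j,k)} = (-1)^j j! \, b_{(i,j,k)}$ holds by definition from Section 3, so the substitution is immediate and valid on the full range $1 \leq k \leq i+j$ required by the statement.
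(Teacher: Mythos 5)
Your proposal is correct and coincides with the paper's own treatment: the paper derives this corollary directly from the preceding Proposition via the definitional relation $b_{(i,j,k)}=d_{(i,j,k)}/((-1)^jj!)$, exactly as you do. Nothing further is needed.
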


In the next section, we will prove the main theorem.  
\section{The proof of $b_{(i,1,k)}, \ b_{(i,2,k)}>0$}
In this section, we complete the proof of the main result.  
\begin{theorem}\label{thm:main}
When $i\geq1$, $j=1,2$, and $1\leq k\leq i+j$, we have
\[b_{(i,j,k)}>0\]
\end{theorem}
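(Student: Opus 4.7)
The plan is to split the proof according to the two cases in Corollary \ref{cor:explicitb}. Case (1) covers $i\leq k\leq i+j$, where the constraint $i+j-k\leq j\leq 2$ means the Bernoulli sum involves only $\ell_r\in\{0,1,2\}$ and can be handled by direct computation. Case (2) covers $k<i$, where $j=1$ is a one-term sum and $j=2$ requires a genuine inequality that will be the main technical point.

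For Case (1), I would tabulate the three possibilities $m:=i+j-k\in\{0,1,2\}$, using $B_0=1$, $B_1=-\tfrac12$, $B_2=\tfrac16$. For $m=0$ the only summand is $1$; for $m=1$ there are $i$ compositions each contributing $\tfrac12$, giving $b_{(i,j,k)}=i/2$; for $m=2$ the compositions split into ``one $\ell_r=2$'' (contributing $i/12$) and ``two $\ell_r=1$'' (contributing $\binom{i}{2}/4$), giving $b_{(i,2,i)}=i/12+i(i-1)/8>0$. All are manifestly positive.

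For Case (2), write $S_k(n):=\sum_{\ell_1+\cdots+\ell_k=n}\frac{1}{(\ell_1+1)\cdots(\ell_k+1)}$, and note $\stirling{1}{0}=\stirling{2}{0}=0$, $\stirling{1}{1}=\stirling{2}{1}=\stirling{2}{2}=1$. Then the formula gives
\[
b_{(i,1,k)}=S_k(i-k+1),\qquad b_{(i,2,k)}=\tfrac{1}{2}\bigl(2S_k(i-k+2)-S_k(i-k+1)\bigr).
\]
The $j=1$ case is immediate. For $j=2$, setting $n:=i-k+1\geq 2$ (since $k<i$), the problem reduces to the inequality $2S_k(n+1)>S_k(n)$ for all $n\geq 2$ and $k\geq 1$.

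The main obstacle is this last inequality. My approach is to identify $S_k(n)$ with unsigned Stirling numbers of the first kind. Since $S_k(n)$ is the coefficient of $s^{n+k}$ in $(-\log(1-s))^k$ and $(-\log(1-s))^k/k!=\sum_{m\geq k}|s(m,k)|s^m/m!$, one obtains
\[
S_k(n)=\frac{k!\,|s(n+k,k)|}{(n+k)!}.
\]
Setting $m:=n+k\geq 3$, the desired inequality $2S_k(n+1)>S_k(n)$ rearranges to $2|s(m+1,k)|>(m+1)|s(m,k)|$. Applying the standard recurrence $|s(m+1,k)|=m|s(m,k)|+|s(m,k-1)|$ transforms this into
\[
(m-1)|s(m,k)|+2|s(m,k-1)|>0,
\]
which is clear since $m-1\geq 2>0$ and $|s(m,k)|>0$ for $1\leq k\leq m$. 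Combining Case (1) and Case (2), Theorem \ref{thm:main} follows, and Theorem \ref{Main} follows by Theorem \ref{theorem:Suzuki}.
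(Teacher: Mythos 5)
Your argument is correct, and up to the crucial step it coincides with the paper's: the same two-case split from Corollary \ref{cor:explicitb}, the same direct positivity of $(-1)^{\ell}B_{\ell}$ for $\ell\in\{0,1,2\}$ in Case (1), and the same one-term evaluation for $j=1$ in Case (2). Where you genuinely diverge is the key inequality $2S_k(n+1)>S_k(n)$ (with $n=i-k+1\geq 2$) for the case $k<i$, $j=2$. The paper argues termwise over compositions: it rewrites the sum over compositions of $n+1$ as a double sum $\sum_{\ell_1+\cdots+\ell_k=n}\sum_{s=1}^{k}\frac{1}{(\ell_1+1)\cdots(\ell_s+2)\cdots(\ell_k+1)}$, distributes $\frac1k$ of the subtracted term over the index $s$, and checks each bracket $\frac{2}{\ell_s+2}-\frac{1}{k(\ell_s+1)}$ is positive for $k>1$, handling $k=1$ by the explicit value $\frac{i}{2(i+2)(i+1)}$. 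That re-indexing counts each composition of $n+1$ once for every nonzero part, so the double sum in fact majorizes $S_k(n+1)$ rather than equalling it; your route sidesteps this multiplicity bookkeeping entirely. By reading off $S_k(n)=k!\,|s(n+k,k)|/(n+k)!$ from the generating function in Corollary \ref{cor:log} and applying the recurrence $|s(m+1,k)|=m|s(m,k)|+|s(m,k-1)|$, you reduce the inequality to the manifestly positive quantity $(m-1)|s(m,k)|+2|s(m,k-1)|$ with $m=n+k\geq 3$. What your approach buys is a clean, checkable two-line verification that isolates exactly where $n\geq2$ is used and makes the $k=1$ case uniform with the rest; the cost is importing standard facts about unsigned Stirling numbers of the first kind, which the paper does not otherwise invoke. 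The concluding deduction of Theorem \ref{Main} via Theorem \ref{theorem:Suzuki} is the same in both.
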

\begin{proof}
As in the last corollary, we divide the proof into two cases: (1) $i\leq k$ and (2) $k<i$. 

Case (1) : \ $i\leq k\leq i+j$.

By the above corollary, we know
\[b_{(i,j,k)}=\sum_{\ell_1+\cdots+\ell_i=i+j-k}{\fr<(-1)^{\ell_1}B_{\ell_1}/\ell_1!>\times\cdots\times\fr<(-1)^{\ell_i}B_{\ell_i}/\ell_i!>}.\]
Since $0\leq i+j-k\leq j$ and $j=1 \ \text{or} \ 2$, each of all $\ell_1, \ldots ,\ell_i$ is 0, 1, or 2 in the sum part of the above equation.  Now, by definition, we note $B_0=1$, $B_1=-\fr<1/2>$, $B_2=\fr<1/6>$. In particular, $(-1)^\ell B_\ell>0$ \ (for $\ell=0,1,2$). 
This proves $b_{(i,j,k)}>0$ (for $j=1,2$). 

Case (2) : \ $k<i$.

By the above corollary, we know
\[b_{(i,j,k)}=\fr<1/(-1)^{j}j!>\sum_{p=0}^{j}{(-1)^pp!\stirling{j}{p}\left(\sum_{\ell_1+\cdots+\ell_k=p+i-k}{\fr<1/(\ell_1+1)\cdots(\ell_k+1)>}\right)}.\]
By definition, we note $\stirling{1}{0}=\stirling{2}{0}=0$, $\stirling{1}{1}=\stirling{2}{1}=\stirling{2}{2}=1$. First, when $j=1$, 
\begin{align*}
b_{(i,1,k)}&=\fr<1/(-1)1!>(-1)1!\stirling{1}{1}\left(\sum_{\ell_1+\cdots+\ell_k=1+i-k}{\fr<1/(\ell_1+1)\cdots(\ell_k+1)>}\right)\\
&=\sum_{\ell_1+\cdots+\ell_k=1+i-k}{\fr<1/(\ell_1+1)\cdots(\ell_k+1)>} >0 .
\end{align*}
Finally, when $j=2$, 
\begin{align*}
b_{(i,2,k)}&=\fr<1/(-1)^22!>\left\{\begin{array}{l}(-1)1!\stirling{2}{1}\left(\displaystyle\sum_{\ell_1+\cdots+\ell_k=1+i-k}{\fr<1/(\ell_1+1)\cdots(\ell_k+1)>}\right)\\+(-1)^22!\stirling{2}{2}\left(\displaystyle\sum_{\ell_1+\cdots+\ell_k=2+i-k}{\fr<1/(\ell_1+1)\cdots(\ell_k+1)>}\right)\end{array}\right\}\\
&=\fr<1/2>\left\{2\sum_{\ell_1+\cdots+\ell_k=1+i-k}{\fr<1/(\ell_1+1)\cdots(\ell_k+1)>}-\sum_{\ell_1+\cdots+\ell_k=2+i-k}{\fr<1/(\ell_1+1)\cdots(\ell_k+1)>}\right\}\\
&=\fr<1/2>\sum_{\ell_1+\cdots+\ell_k=1+i-k}{\left\{\left(\sum_{s=1}^k{\fr<2/(\ell_1+1)\cdots(\ell_s+2)\cdots(\ell_k+1)>}\right)-\fr<1/(\ell_1+1)\cdots(\ell_s+1)\cdots(\ell_k+1)>\right\}}\\
&=\fr<1/2>\sum_{\ell_1+\cdots+\ell_k=1+i-k}{\sum_{s=1}^k{\left(\fr<2/(\ell_1+1)\cdots(\ell_s+2)\cdots(\ell_k+1)>-\fr<1/k>\fr<1/(\ell_1+1)\cdots(\ell_s+1)\cdots(\ell_k+1)>\right)}}\\
&=\fr<1/2>\sum_{\ell_1+\cdots+\ell_k=1+i-k}{\sum_{s=1}^k{\left\{\left(\fr<2/(\ell_s+2)>-\fr<1/k>\fr<1/(\ell_s+1)>\right)\left(\prod_{t\neq s}\fr<1/(\ell_t+1)>\right)\right\}}}\\
\end{align*}
Moreover, we have
\begin{align*}
\fr<2/(\ell_s+2)>-\fr<1/k>\fr<1/(\ell_s+1)>&=\fr<1/k>\left(\fr<2k/\ell_s+2>-\fr<1/\ell_s+1>\right)\\
&=\fr<1/k>\cdot\fr<2k(\ell_s+1)-(\ell_s+1)-1/(\ell_s+2)(\ell_s+1)>\\
&=\fr<1/k>\cdot\fr<(2k-1)(\ell_s+1)-1/(\ell_s+2)(\ell_s+1)>.
\end{align*}
Clearly, this shows $b_{(i,2,k)}>0$ if $k>1$. For $k=1$, we can compute directly $b_{(i,2,k)}=\fr<i/2(i+2)(i+1)>>0$ from the above equation. This completes the proof. 
\end{proof}


\end{document}